\documentclass[12pt]{article}  
  
\usepackage{latexsym,amsfonts,amsmath,amssymb,epsfig,  
tabularx,amsthm,dsfont,enumerate}  
  
  
\setlength{\textwidth}{6.5in} \setlength{\oddsidemargin}{0in}  
\setlength{\evensidemargin}{0in}  
  
\makeatletter  
\newcommand{\LeftEqNo}{\let\veqno\@@leqno}  
\makeatother  
  
\theoremstyle{definition}  
\newtheorem{thm}{Theorem}  
\newtheorem{rem}[thm]{Remark}

\newtheorem{cor}[thm]{Corollary}  
\newtheorem{example}[thm]{Example}

\newcommand{\calS}{{\mathbb{S}}}  
\newcommand{\calA}{{\mathbb{APP}}}  
\newcommand{\calI}{{\mathbb{I}}}  
\renewcommand\rho{\varrho}   
\newcommand\reals{\mathbb{R}}

\newcommand\N{\mathbb{N}}  
\newcommand\APP{{\rm APP}}  
\newcommand\INT{{\rm INT}}  
\newcommand\Z{\mathbb{Z}}

\newcommand\eps{\varepsilon}  
  
\newcommand\uset{{\mathfrak{u}}}

\newcommand{\e}{\varepsilon}  
\newcommand{\lall}{\Lambda^{\rm all}}  
\newcommand{\lstd}{\Lambda^{\rm std}}  
  
\newcommand{\widebar}[1]{\mbox{\kern1pt\hbox{\vbox{\hrule height 0.5pt \kern0.25ex  
        \hbox{\kern-0.05em \ensuremath{#1 }\kern-0.05em}}}}\kern-0.1pt}  
\newcommand\il{\left<}  
\newcommand\ir{\right>}

\newlength{\fixboxwidth}  
\setlength{\fixboxwidth}{\marginparwidth}  
\addtolength{\fixboxwidth}{-7pt}

\title{Tractability of Multivariate Problems \\  
for Standard and Linear Information \\  
in the Worst Case Setting: Part I}  
  
\author{Erich Novak\footnote{This  
author was partially supported by the DFG-Priority Program 1324.},\\  
Mathematisches Institut, Universit\"at Jena\\  
Ernst-Abbe-Platz 2, 07743 Jena, Germany\\  
email: erich.novak@uni-jena.de\\  
\qquad  
Henryk Wo\'zniakowski\footnote{This author was partially  
supported by the National Science  
Foundation and by the National Science Centre, Poland,  
based on the decision DEC-2013/09/B/ST1/04275.}\\  
Department of Computer Science, Columbia University,\\  
New York, NY 10027, USA, and\\  
Institute of Applied Mathematics, University of Warsaw\\  
ul. Banacha 2, 02-097 Warszawa, Poland\\  
email:\ henryk@cs.columbia.edu}  
\begin{document}  
\maketitle  
\begin{abstract}  
We present a lower error bound for approximating linear multivariate  
operators defined over Hilbert spaces in terms of the error bounds   
for appropriately constructed linear functionals   
as long as algorithms use function values. Furthermore, some of  
these linear functionals have the same norm as the linear operators.  
We then apply this error bound for linear (unweighted) tensor products.  
In this way we use negative tractability results known for linear functionals  
to conclude the same negative results for linear operators.   
In particular, we prove that $L_2$-multivariate approximation defined   
for standard Sobolev space suffers the curse of dimensionality if   
function values are used although the curse is not present if linear   
functionals are allowed.  
\end{abstract}  
  
\newpage  
  
\section{Introduction}  
The understanding of the intrinsic difficulty of approximation of   
$d$-variate problems   
is a challenging problem especially when   
$d$ is large.  
We consider  
algorithms that approximate $d$-variate problems and use  
finitely many linear functionals: we compare the class $\lall$  
of arbitrary linear information functionals with the class  
$\lstd$ of information functionals that are given by function evaluations  
at single points.  
  
To find best algorithms for the class $\lall$ is usually much   
easier than for the class $\lstd$, in particular if the source space is a Hilbert space. 
This is especially the case   
for the worst case setting. The state of art may be found in \cite{NW12},  
where the reader may find a number of surprising results. For example,  
there are multivariate problems for which the best rate of convergence   
of algorithms using $n$ appropriately chosen linear functionals  
is $n^{-1/2}$ whereas for $n$ function values the best rate can be   
arbitrarily bad, i.e., like $1/\ln(\ln(\cdots\ln(n))))$, where the number   
of $\ln$ can be arbitrarily large, see \cite{HNV} which is also reported   
in \cite{NW12} pp. 292-304. Furthermore,  
the dependence on $d$ may be quite different  
for the linear and standard classes. There are examples   
of interesting multivariate problems for which the dependence on $d$   
\emph{is not} exponential for the class $\lall$,   
and \emph{is} exponential for the class $\lstd$.   
The exponential dependence on $d$ is called the \emph{curse of dimensionality}.  
On the other hand, for some other multivariate problems   
there is no difference between $\lall$ and $\lstd$. Examples can be found,  
in particular, in \cite{GW11} and \cite{NW08,NW10,NW12}.  
  
Tractability deals with how the intrinsic difficulty of a   
multivariate problem depends on $d$ and on $\e^{-1}$, where  
$\e$ is an error threshold.  
We would like to know when the curse of dimensionality holds and when   
we have a specific dependence on $d$ which is not exponential.  
There are various ways of measuring the lack of exponential   
dependence and that leads to different notions of tractability.  
In particular, we have polynomial tractability (PT) if   
the intrinsic difficulty   
is polynomial in both $d$ and $\e^{-1}$.  
We have quasi-polynomial tractability (QPT) if 
the intrinsic difficulty   
is at most proportional to $\e^{-\,t\,\ln\,d}$   
for some $t$ independent of $d$ and $\e$.    
  
Obviously, tractability may depend on which of the classes $\lstd$ or $\lall$ is used.  
Tractability results for $\lstd$ cannot be better than for $\lall$.  
The main question is for which multivariate problems they are more or   
less the same or for which multivariate problems   
they are essentially different.  
  
These questions   
were already addressed in \cite{NW08,NW10,NW12}.  
Still,  especially the worst case setting is not fully understood.  
We would like to get a better understanding  
how the power of the standard class $\lstd$   
is related to the power of the class $\lall$ of information.  
Ideally, we would like to characterize for which multivariate problems   
the classes $\lstd$ and $\lall$ lead  
to more or less the same tractability  
results and for which tractability results are essentially different.   
  
We plan to write a number of papers about this problem under the same title.  
We present the first part of this project. We restrict ourselves to  
linear multivariate problems defined as approximation of a linear  
continuous operator $S:F\to G$ for general Hilbert spaces $F$ and~$G$.  
Since we want to study the class $\lstd$ we need to assume that function   
values are well defined and they correspond to linear continuous functionals.  
This is equivalent to assuming that $F$ is a reproducing kernel Hilbert   
space.   
  
For the worst case setting and for the class $\lall$, it is known  
what is the best way to approximate $S$.   
The intrinsic difficulty of approximating $S$  
is defined as the \emph{information complexity}  
which is the minimal number of linear functionals which are needed to  
find an algorithm whose worst case error is at most~$\e\|S\|$.  
This depends on the eigenvalues of the operator $S^*S:F\to F$.  
For the class $\lstd$ the situation is much more complex and  
the \emph{information complexity}, which is now   
the minimal number of function values needed to get an error $\e\|S\|$,   
depends not only on the eigenvalues of $S^*S$.   
  
Our first result is the construction of continuous linear functionals $I$   
which are at most as hard to approximate as $S$ for the class $\lstd$.   
Furthermore, we characterize $I$ for which $\|I\|=\|S\|$. They are of the form  
$$  
I=\left<\cdot,S^*g\right>_F\ \ \ \ \mbox{with}\ \ \   
g=\lambda_1^{-1/2}S\eta,  
$$  
where $\lambda_1$ is the largest eigenvalue of $S^*S$ and  
$\eta$ of norm $1$ belongs to the eigenspace corresponding to $\lambda_1$.  
Hence, if $\lambda_1$ is of multiplicity $1$  then the choice of $g$ is   
essentially unique. If $\lambda_1$ is of multiplicity larger than $1$, then   
the choice of $g$ is not unique and   
may lead to trivial or hard linear functionals $I$.    
  
For $I$ with $\|I\|=\|S\|$, the information complexity of $I$   
for the class $\lstd$  
is at most equal to the information complexity of $S$. Hence,   
if $I$ is hard to approximate so is $S$.   
  
The essence of this result is that   
for approximation of linear functionals over some Hilbert spaces  
there is a proof technique which allows to find sharp error bounds.  
This proof technique was developed in \cite{NW01} and requires that  
the reproducing kernel of $F_1$ has a so called decomposable part.   
  
We verify how this lower bound on approximating $S$ works   
for linear $d$-folded (unweighted) tensor product problems.   
Then the corresponding linear functionals  $I$ are also $d$-folded  
tensor products. We then may apply the existing negative   
tractability results for $I$ and conclude the same   
negative tractability results for $S$.   
   
We illustrate our approach for a number of examples.   
In particular, we consider the Sobolev space  
   $F_d=F_1^{\otimes\,d}$ with the reproducing kernel  
$$  
K_d(x,t)=\prod_{j=1}^d(1+\min(x_j,t_j))\ \ \ \ \mbox{for all}  
\ \ x,t\in[0,1]^d,  
$$  
and $G_d=L_2([0,1]^d)$.   
  
Let $S=S_d:F_d\to G_d$ be any non-zero tensor product  
operator $S_d=S_1^{\otimes\,d}$ with $S_1:F_1\to G_1$.   
Let $\{\lambda_j\}$ be the ordered sequence of    
eigenvalues of $S_1^*S_1$.  
Let   
$$  
{\rm decay}_\lambda:=\sup\{\,r>0\, :\ \lim_{n\to\infty}n^r\lambda_n=0\,\}  
$$  
denote the polynomial decay of the eigenvalues $\lambda_n$.  
If the set of $r$ above is empty we set ${\rm decay}_\lambda=0$.  
  
Let $\calS=\{S_d\}_{d=1}^\infty$. It is known, see \cite{GW11}, that   
$\calS$ is quasi-polynomially tractable (QPT) for the class $\lall$   
iff $\lambda_2<\lambda_1$ and ${\rm decay}_{\lambda}>0$.   
Furthermore, if $\lambda_2$ is positive   
then $\calS$ is not polynomially tractable (PT).  
On the other hand, if $\lambda_2=\lambda_1$ then $\calS$ suffers   
from the curse of dimensionality for the class $\lall$ (and obviously  
   also for $\lstd$).    
  
For the class $\lstd$, assume   
without loss of generality that $\lambda_2<\lambda_1$.   
Let $\eta_1$ be a  normalized eigenfunction   
corresponding to the largest eigenvalue $\lambda_1$.   
We prove that   
$\calS$ suffers from the curse of dimensionality if  
\begin{equation}\label{iffiff}  
\eta_1\not=aK_1(\cdot,t)=a\,(1+\min(\cdot,t))\ \ \ \   
\mbox{for all}\ \ a\in\reals\ \   
\mbox{and}\ \ t\in[0,1].   
\end{equation}  
This means that we have the curse of dimensionality  
as long as the eigenfunction of $S_1$   
corresponding to the largest eigenvalue is not proportional to the univariate  
reproducing kernel with one argument fixed. We then verify that this assumption  
holds for multivariate approximation, i.e., for $S_1f=f$.  
This partially solves the open problem 131 from \cite{NW12} p. 361.  
  
We believe that the assumption~\eqref{iffiff} is also necessary   
for the curse. More generally, we believe that  
for $\eta_1=aK_1(\cdot,t)$ for some real $a$ and $t$ from the common   
domain of univariate functions, and of course for $\lambda_2<\lambda_1$ and  
${\rm decay}_{\lambda}>0$,    
we have QPT for the class $\lstd$ and this holds for any $K_1$.   
But this will be the subject   
of the next part of our project.  
  
In this paper we discuss only unweighted tensor products and that is why we   
   do not have polynomial tractability (PT) for problems with   
two positive eigenvalues. PT and other notions of tractability may hold  
if we consider weighted tensor products with sufficiently decaying weights.  
This will be also a subject of our next study.  
  
\section{Relation between Linear Functionals and Operators}  
  
Consider a continuous linear and non-zero operator $S:F\to G$,  
where $F$ is a reproducing kernel Hilbert space of real functions $f$ defined over   
a common domain $D\subset \reals^d$  
for some positive integer $d$, and $G$ is a Hilbert space.  
We approximate $S$ by algorithms~$A_n$ that use at most~$n$   
linear functionals. Without loss of generality we may assume that $A_n$ is linear,  
see e.g., \cite{NW08,TWW88}. That is,  
$$  
A_nf=\sum_{j=1}^nL_j(f)\, g_j  
$$  
for some $L_j\in F^*$ and $g_j\in G$.  
Using the same proof as in \cite{NW08} p. 345, we may also assume   
that $g_j =Sf_j$ for some $f_j\in F$.   
  
We consider two classes of linear functionals $L_j$'s:  
\begin{itemize}  
\item  the linear class of information $\lall$  
which consists of all continuous and linear functionals $L_j$'s, i.e.,   
$\lall=F^*$, and   
\item the standard class   
of information $\lstd$ which consists of function values,  
i.e., $L_j(f)=\il f,K(\cdot,t_j)\ir_F=f(t_j)$ for some $t_j\in D$,   
where $K:D\times D\to \reals$ is the  reproducing kernel of $F$.   
\end{itemize}  
  
The $n$th minimal (worst case) error of approximating $S$   
for the class $\Lambda\in\{\lstd,\lall\}$   
is defined as  
$$  
e_n(S,\Lambda)=\inf_{L_1,\dots,L_n\in\Lambda,\,g_1,\dots,g_n\in G}\   
\sup_{\|f\|_{F}\le1}\,\|Sf-A_nf\|_G.  
$$  
For $n=0$, we take $A_n=0$ and then we obtain the initial error which is  
independent of $\Lambda$ and given by  
$$  
e_0(S,\Lambda)=e_0(S)=\|S\|_{F\to G}.  
$$  
 
For the class $\lall$, it is well known that   
$\lim_{n\to\infty}e_n(S,\lall)=0$  
iff $S$ is compact.  
This is why we always may assume that $S$ is compact.  
Then it is known  
that the $n$th minimal errors depend on the eigenvalues   
of  
$$ 
W=S^*S:F\to F. 
$$ 
More precisely,   
let $(\lambda_j,\eta_j)$ be eigenpairs of $W$,  
$$  
W\eta_j=\lambda_j\eta_j,\ \ \ \mbox{with}\ \ \ \il \eta_j,\eta_k\ir_F=\delta_{j,k}  
\ \ \mbox{and}\ \ \lambda_1\ge\lambda_2\ge\cdots.  
$$  
Observe here that the $\lambda_j$ are uniquely defined,  
but the $\eta_j$ are not unique.  
Moreover, we formally define $\lambda_j=0$ if the dimension  
of $F$ is finite and  
$j$ is larger than this dimension.  
Then  
$$  
e_n(S,\lall)=\sqrt{\lambda_{n+1}}\ \ \ \ \mbox{for}\ \ n=0,1,\dots.  
$$  
Hence $e_0(S)=\|S\|_{F\to G}=\sqrt{\lambda_1}$,   
and since $S$ is non-zero we have $\lambda_1>0$.   
  
The situation is much more complicated for the class $\lstd$. Obviously,  
$$  
e_n(S,\lstd)\ge e_n(S,\lall)\ \ \ \ \mbox{for all}\ \ n=0,1,\dots,  
$$ but it is not clear when the sequences $e_n(S,\lstd)$   
and $e_n(S,\lall)$ behave similarly. There are many papers  
studying the powers of $\lstd$ and $\lall$.   
The state of art can be found in~\cite{NW12}.  
In this paper we continue this study and show that   
the sequence $e_n(S,\lstd)$ can behave   
quite differently than the sequence $e_n(S,\lall)$.   
This will be done by showing first that many continuous and linear functionals   
are at most as hard to approximate as $S$ and there are   
functionals for which  we can also match the initial error $e_0(S)$   
of $S$.  
  
More precisely, for any $g\in G$ with $\|g\|_G=1$ define   
$$  
I_gf=\il f,S^*g\ir_F\ \ \ \ \mbox{for all}\ \ f\in F.  
$$  
Note that $S^*g\in F$ and therefore $I_g$ is a continuous linear functional.  
The $n$th minimal error of approximating $I_g$   
is defined as for $S$, this time with $G$ replaced by $\reals$.  
Clearly, $e_0(I_g)=\|S^*g\|_F$.   
   
\begin{thm}\label{thm1}  
\quad  
  
For any $g\in G$ with $\|g\|_G=1$ we have  
$$  
e_n(I_g,\lstd)\le e_n(S,\lstd) \qquad \mbox{for all} \quad n \in \N_0 . 
$$  
Furthermore,   
$$  
e_0(I_g)= e_0(S) \quad \hbox{and} \quad \Vert g \Vert_G = 1  \qquad \mbox{iff} \qquad  
g=\lambda_1^{-1/2}S\eta,  
$$  
where $\eta$ is any element of norm $1$  
with $S^*S \eta = \lambda_1 \eta$.  
\qed   
\end{thm}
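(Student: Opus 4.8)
The plan is to treat the two assertions separately, both of which reduce to elementary Hilbert-space manipulations once one observes the identity $I_g f=\langle f,S^*g\rangle_F=\langle Sf,g\rangle_G$ coming straight from the definition of the adjoint.

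For the inequality $e_n(I_g,\lstd)\le e_n(S,\lstd)$, I would start from an arbitrary \emph{linear} $\lstd$-algorithm $A_nf=\sum_{j=1}^n f(t_j)\,g_j$ for $S$ (legitimate, since by the linearity reduction recalled above $e_n(S,\lstd)$ is an infimum over such algorithms), and associate to it the $\lstd$-algorithm $B_nf:=\langle A_nf,g\rangle_G=\sum_{j=1}^n f(t_j)\,\langle g_j,g\rangle_G$ for $I_g$, which uses exactly the same $n$ function values $f(t_1),\dots,f(t_n)$. Then for every $f$ with $\|f\|_F\le1$,
\[
|I_gf-B_nf|=|\langle Sf-A_nf,\,g\rangle_G|\le\|Sf-A_nf\|_G\,\|g\|_G=\|Sf-A_nf\|_G
\]
by Cauchy--Schwarz and $\|g\|_G=1$. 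Taking suprema over the unit ball of $F$ and then infima over all linear $\lstd$-algorithms $A_n$ gives the claim, since each $B_n$ is itself an admissible $\lstd$-algorithm for $I_g$.

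For the equivalence, recall $e_0(I_g)=\|S^*g\|_F$ and $e_0(S)=\sqrt{\lambda_1}$, so the assertion is that, among $g$ with $\|g\|_G=1$, one has $\|S^*g\|_F=\sqrt{\lambda_1}$ precisely when $g=\lambda_1^{-1/2}S\eta$ with $\|\eta\|_F=1$ and $S^*S\eta=\lambda_1\eta$. The key step is to pass to the companion operator $SS^*:G\to G$: writing $\|S^*g\|_F^2=\langle SS^*g,g\rangle_G$, and using that $SS^*$ is compact, self-adjoint and positive with largest eigenvalue $\lambda_1$ (the nonzero spectra of $S^*S$ and $SS^*$ coincide, $S$ being compact), the Rayleigh-quotient bound yields $\|S^*g\|_F^2\le\lambda_1\|g\|_G^2$, with equality iff $g$ lies in the $\lambda_1$-eigenspace of $SS^*$. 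It then remains to match the unit $\lambda_1$-eigenvectors of $SS^*$ with the stated family: if $S^*S\eta=\lambda_1\eta$ and $\|\eta\|_F=1$ then $SS^*(\lambda_1^{-1/2}S\eta)=\lambda_1^{-1/2}S(S^*S\eta)=\lambda_1\,(\lambda_1^{-1/2}S\eta)$ and $\|\lambda_1^{-1/2}S\eta\|_G^2=\lambda_1^{-1}\langle S^*S\eta,\eta\rangle_F=1$; conversely, given a unit $\lambda_1$-eigenvector $g$ of $SS^*$, set $\eta:=\lambda_1^{-1/2}S^*g$ and check $S^*S\eta=\lambda_1\eta$, $\|\eta\|_F^2=\lambda_1^{-1}\langle SS^*g,g\rangle_G=1$, and $\lambda_1^{-1/2}S\eta=\lambda_1^{-1}SS^*g=g$. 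Thus $\eta\mapsto\lambda_1^{-1/2}S\eta$ and $g\mapsto\lambda_1^{-1/2}S^*g$ are mutually inverse bijections between the two unit $\lambda_1$-eigenspheres, which closes the equivalence.

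The Cauchy--Schwarz estimate and the adjoint bookkeeping are routine; the one point needing a little care is the spectral fact that $S^*S$ and $SS^*$ share $\lambda_1$ together with the explicit eigenvector correspondence, so I would state it cleanly (invoking compactness of $S$, assumed throughout) rather than use it implicitly. A secondary subtlety worth a remark is the case when $\lambda_1$ has multiplicity larger than $1$: the statement is an honest ``iff'' only because it quantifies over \emph{all} unit eigenvectors $\eta$, and the bijection above shows exactly which $g$ arise.
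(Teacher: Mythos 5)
Your proof is correct. The first half is essentially identical to the paper's: the authors also take a linear algorithm $A_nf=\sum_j f(t_j)\,Sf_j$, form $B_nf=\langle A_nf,g\rangle_G=\sum_j f(t_j)\langle f_j,S^*g\rangle_F$, and apply Cauchy--Schwarz to $I_gf-B_nf=\langle Sf-A_nf,g\rangle_G$. For the characterization of the extremal $g$, however, you take a genuinely cleaner route. The paper writes $g=\alpha\,S\eta+h$ with $h$ orthogonal to the image under $S$ of the $\lambda_1$-eigenspace, expands $\|S^*h\|_F^2=\langle h,SS^*h\rangle_G$ in the orthonormal system $\{\lambda_j^{-1/2}S\eta_j\}$ of $\overline{SS^*(G)}$, bounds it by $\lambda_{m+1}\|h\|_G^2$ (with $m$ the multiplicity of $\lambda_1$), and concludes $h=0$ from $\lambda_{m+1}<\lambda_1$. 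You instead observe $\|S^*g\|_F^2=\langle SS^*g,g\rangle_G$ and invoke the equality case of the Rayleigh-quotient bound for the compact self-adjoint positive operator $SS^*$, then exhibit the explicit mutually inverse maps $\eta\mapsto\lambda_1^{-1/2}S\eta$ and $g\mapsto\lambda_1^{-1/2}S^*g$ between the unit $\lambda_1$-eigenspheres of $S^*S$ and $SS^*$. The two arguments rest on the same spectral facts (indeed the paper's estimate is essentially a hand-rolled proof of the Rayleigh equality case), but yours packages them more abstractly and shorter, at the cost of needing to state the equality case cleanly as you note; the paper's version is more self-contained and makes the quantitative gap $1-\lambda_{m+1}/\lambda_1$ visible, which is sometimes useful for stability considerations. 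Both handle the multiplicity-$m>1$ case correctly.
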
  
\begin{proof}  
Take an arbitrary linear algorithm $A_nf=\sum_{j=1}^nf(t_j)\,Sf_j$   
for approximating $S$.   
Define   
$$  
B_nf=\sum_{j=1}^nf(t_j)\,\il f_j,S^*g\ir_F  
$$  
as a linear algorithm for approximating $I_g$. Then  
$$  
I_gf-B_nf=\il f-\sum_{j=1}^nf(t_j)\,f_j,S^*g\ir_F=  
\il Sf-\sum_{j=1}^nf(t_j)\,Sf_j,g\ir_G=  
\il Sf-A_nf,g\ir_G.  
$$  
Therefore  
$$  
|I_gf-B_nf|\le \|Sf-A_nf\|_G\,\|g\|_G=\|Sf-A_nf\|_G.  
$$  
Taking the supremum over the unit ball of $F$ and then the infimum over  
$t_j$'s and $f_j$'s, we conclude that   
$$  
e_n(I_g,\lstd)\le e_n(S,\lstd),  
$$  
as claimed.   
 
Let $m$ be the multiplicity of the largest eigenvalue $\lambda_1$, i.e.,  
$\mbox{span}(\eta_1,\eta_2,\dots,\eta_m)$ is the eigenspace of $W$ for  
the eigenvalue $\lambda_1$. 
Take now $g=\lambda_1^{-1/2}S\eta$ for any   
$\eta\in\mbox{span}(\eta_1,\eta_2,\dots,\eta_m)$  with $\|\eta\|_F=1$.  
Then we have $S^*g=\lambda_1^{-1/2}W\eta=  
\lambda_1^{1/2}\eta$ and  
$$  
e_0(I_g)=\|S^*g\|_F=\sqrt{\lambda_1}=\|S\|_{F\to G}=e_0(S).  
$$  
Furthermore,  
$$  
\|g\|_G^2=\il \lambda_1^{-1/2}S\eta,\lambda_1^{-1/2}S\eta\ir_G=\lambda_1^{-1}  
\il\eta,W\eta\ir_F=\lambda_1^{-1}\lambda_1\|\eta\|_F^2 =1.  
$$  
We need to show that $e_0(I_g)=e_0(S)$ holds only for such $g$.  
Take then any $g$ from $G$ such that $\|g\|_G=1$ and $\|S^*g\|_F=\lambda_1^{1/2}$.  
We can represent   
$$  
 g=\alpha\,S\eta+h,  
$$  
where $\eta\in\mbox{span}(\eta_1,\eta_2,\dots,\eta_m)$   
with $\Vert \eta \Vert_F = 1$,   
$\alpha \in \reals$ and $h$ is orthogonal to $S\eta_j$ for  
all $j=1,2,\dots,m$, i.e.,   
$$  
0=\left<h,S\eta_j\right>_G=\left<S^*h,\eta_j\right>_F\ \ \ \   
\mbox{for}\ \ j=1,2,\dots,m.  
$$   
Since $\|S\eta\|_G=\lambda_1^{1/2}$ we have   
$$  
1=\|g\|^2_G= \alpha^2 \lambda_1+\|h\|^2_G.  
$$  
On the other hand,   
\begin{eqnarray*}  
1=\frac1{\lambda_1}\|S^*g\|^2_F&=&\frac1{\lambda_1}\| \alpha \, S^*S\eta+S^*h\|^2_F  
=\frac1{\lambda_1} \|\alpha \lambda_1\,\eta+S^*h\|^2_F\\  
&=&  
\frac1{\lambda_1}\left(\alpha^2 \lambda_1^2+\|S^*h\|^2_F\right)=  
     \alpha^2 \lambda_1+\frac1{\lambda_1}\|S^*h\|^2_F.  
\end{eqnarray*}  
We now analyze $\|S^*h\|_F$. Note that  
$\|S^*h\|^2_F=\left<h,SS^*h\right>_G$. Let  
$$  
G=  \overline{SS^*(G)}\,\oplus\,[\overline{SS^*(G)}]^{\perp}.  
$$  
Hence, for any $h$ from $G$ we have $h=h_1+h_2$ with $h_1\in  
\overline{SS^*(G)}$ and   
$h_2$ orthogonal to $\overline{SS^*(G)}$. Then  
\begin{equation}\label{stuff}  
\|S^*h\|^2_F=\left<h_1+h_2,SS^*h_1\right>_G=\left<h_1,SS^*h_1\right>_G.  
\end{equation}  
Let $k=\sup\{j:\ \lambda_j>0\}$. Hence, if all $\lambda_j>0$  then $k=\infty$,  
otherwise $k$ is the number of positive eigenvalues $\lambda_j$.  
Clearly, $k\ge m$.  
  
We know that $S^*S\eta_j=\lambda_j\eta_j$. Then $S\eta_j\not=0$ for all  
finite $j$ which are at most $k$. Multiplying the last equation by $S$   
we obtain  
$$  
SS^*(S\eta_j)=\lambda_j(S\eta_j).  
$$  
Hence, $(\lambda_j, \lambda_j^{-1/2}S\eta_j)$ is an eigenpair of $SS^*$ and  
$$  
\left<\lambda_j^{-1/2}S\eta_j,\lambda_i^{-1/2}S\eta_i\right>_G  
=\frac{\left<S\eta_j,S\eta_i\right>_G}{\sqrt{\lambda_j\lambda_i}}=  
\frac{\left<\eta_j,S^*S\eta_i\right>_F}{\sqrt{\lambda_j\lambda_i}}=  
\frac{\lambda_i}{\sqrt{\lambda_j\lambda_i}}\left<\eta_j,\eta_i\right>_F=  
\delta_{i,j}.  
$$  
That means that $\lambda_j^{-1/2}S\eta_j$'s are orthonormal in $G$.   
Since $SS^*(G) \subset S(F)$, the  
$\lambda_j^{-1/2}S\eta_j$'s  
build a complete orthonormal system of $\overline{SS^*(G)}$ 
and, when we return to \eqref{stuff}, we may write 
$$  
h_1=\sum_{j=1}^k  
\left<h_1,\lambda_j^{-1/2}S\eta_j\right>_G\lambda_j^{-1/2}S\eta_j   
$$  
and then  
$$  
\left<h_1,SS^*h_1\right>_G=\sum_{j=1}^k\left<h_1,  
\lambda_j^{-1/2}S\eta_j\right>_G^2  \lambda_j.  
$$  
Since $0=\left<h,S\eta_j\right>_G=\left<h_1,S\eta_j\right>_G$ for all $j\le m$,  
we conclude that   
\begin{eqnarray*}  
\left<h_1,SS^*h_1\right>_G&=&\sum_{j=m+1}^k  
\left<h_1,\lambda_j^{-1/2}S\eta_j\right>_G^2 \lambda_j\le  
\lambda_{m+1}\sum_{j=m+1}^k   
\left<h_1,\lambda_j^{-1/2}S\eta_j\right>_G^2 \\  
&=&\lambda_{m+1}\|h_1\|^2_G  
\le \lambda_{m+1}\|h\|^2_G.   
\end{eqnarray*}  
{}From this, we get  
\begin{eqnarray*}  
1=\alpha^2\lambda_1+\frac1{\lambda_1} \|S^*h\|^2_F&\le&  
\alpha^2\lambda_1  
+\frac{\lambda_{m+1}}{\lambda_1}\,\|h\|^2_G  
=  
 \alpha^2\lambda_1+\|h\|^2_G-  
\left(1-\frac{\lambda_{m+1}}{\lambda_1}\right)\,\|h\|^2_G\\  
&=&  
1- \left(1-\frac{\lambda_{m+1}}{\lambda_1}\right)\,\|h\|^2_G.  
\end{eqnarray*}  
Since $\lambda_{m+1}<\lambda_1$   
we conclude that $h=0$ and $g=\alpha\,S\eta$ with $\alpha^2\lambda_1=1$.  
This completes the proof.    
\end{proof}      
  
For any $g$ from $G$ 
the linear functional $I_g$ can be also written as  
$$  
I_gf=\il Sf,g\ir_G\ \ \ \ \mbox{for all}\ \ f\in F.  
$$  

\begin{example} 
As an example, if we take $G=L_2([0,1]^d)$ then  
$$  
I_gf=\int_{[0,1]^d}(Sf)(x)\,g(x)\,{\rm d}x.  
$$  
Furthermore, if we additionally assume that $F$ is continuously   
embedded in $L_2([0,1]^d)$ and take $S=\APP_d$ as multivariate approximation,  
$\APP_d  f=f$ for all $f\in F$, then  
$$  
I_gf=\int_{[0,1]^d}f(x)\,g(x)\,{\rm d}x.  
$$   
If $g\equiv 1$ then   
$$  
\INT_df=I_gf=\int_{[0,1]^d}f(x)\,{\rm d}x  
$$  
is multivariate integration, and   
$$  
e_0(\INT_d)\le e_0(\APP_d).  
$$  
This relation between multivariate integration and approximation has  
been used in many papers.  
For some spaces the norm of multivariate integration and approximation is   
the same. This is the case for Korobov spaces and some Sobolev spaces  
as will be reported later.  
  
However, in general, the norm of multivariate integration is smaller  
and sometimes exponentially smaller   
than the norm of multivariate approximation.  
This is the case for some other Sobolev spaces. For instance, this holds  
for the space $F=F_d$ with the reproducing kernel  
$$  
K_d(x,y)=\prod_{j=1}^d(1+\min(x_j,y_j))\ \ \ \  
\mbox{for all}\ \ x_j,y_j\in[0,1].  
$$  
It is known, see \cite{NW12} pp. 353 and 411, that   
$$  
e_0(\INT_d)=(4/3)^{\,d/2}=(1.3333\dots)^{\,d/2}  
\ \ \ \mbox{and}\ \ \ e_0(\APP_d)=(1.35103388\dots)^{\,d/2} . 
$$  
Hence,   
$$  
\frac{e_0(\APP_d)}{e_0(\INT_d)}=(1.013\dots)^{\,d/2}.  
$$  
Although $1.013\dots$ is barely larger than one, the ratio  
of the initial errors for multivariate approximation and integration  
goes to infinity exponentially fast with $d$.  
   
As we shall see,   
the multiplicity of the largest eigenvalue   
for this   
multivariate approximation is $m=1$. Therefore,   
in order to match the norm of multivariate approximation   
we must use   
a weighted integration problem   
$I_g(f) = (f,g)_{L_2}$ with   
$g=\lambda_1^{-1/2}\eta_1$ (or $g=-\lambda_1^{-1/2}\eta_1$)   
which for our example of $F_d$   
is not equal to the constant function $1$.   
In Section \ref{unweighted} we will show that $g(x)=\prod_{j=1}^dg_1(x_j)$   
for $x=[x_1,\dots,x_d]\in[0,1]^d$.  
We find it interesting to know the ``most difficult''  
integration problem $I_g$ (with $\Vert g \Vert_2=1$) for a Hilbert space of functions  
and hence present  
the graph of the function $g_1$   
in Figure 1.   
The same $g$ is also the unique (up to a multiplicative constant)  
function that maximizes  
$\frac{\Vert g \Vert_2}{\Vert g \Vert_F}$  and hence solves an important  
optimization problem.  
\begin{figure}[ht]\begin{center}  
\includegraphics[width=10cm]{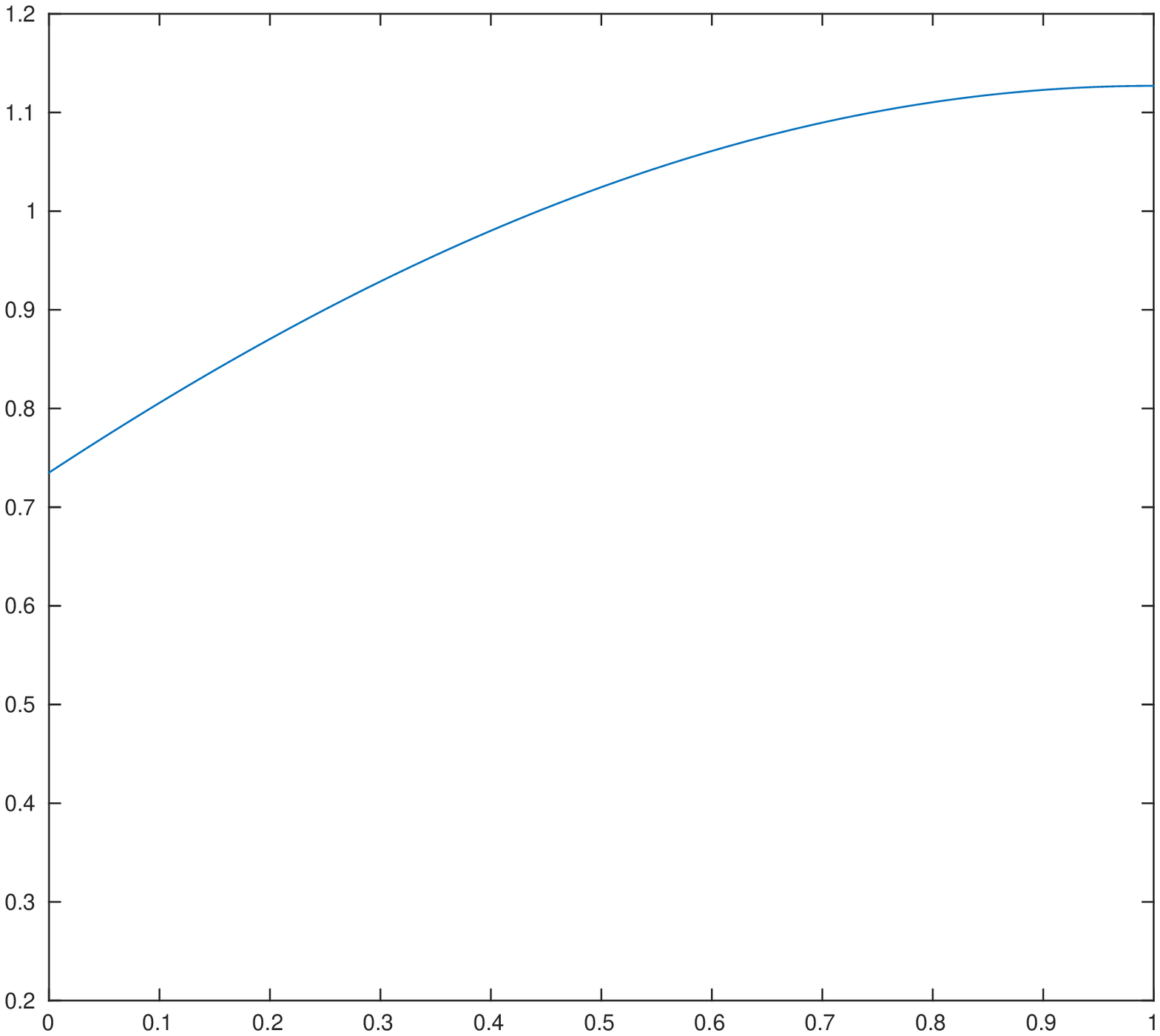}  
\vspace*{-.5cm}  
\caption{Density $g$ for which $\Vert I_g \Vert = \Vert \APP_1 \Vert$}   
\label{figure}   
\end{center}\end{figure}  
\end{example} 
  
We now show that the choice of $\eta$ in $g=\lambda_1^{-1/2}S\eta$   
may be important if the multiplicity  
of~$\lambda_1$ is larger than $1$. That is, it may happen that for some such   
$g$ the functional $I_g$ is trivial and for some other $g$,   
it may be very difficult.   
 
\goodbreak  
  
\begin{example}  
\quad  
 
Let $F_1$ be the space of  
functions $f: [0,1] \to \reals$ 
that are constant over $[0,\tfrac12]$ and $(\tfrac12,1]$. 
That is, for $f\in F_1$ there are real $a$ and $b$ such that  
$f(x)=a$ for all $x\in[0,\tfrac12]$  
and $f(x)=b$ for all $x\in(\tfrac12,1]$.   
 
We equip $F_1$ with the $L_2$ norm which can be written  
(for the space $F_1$) as  
$$  
\|f\|_{F_1}=\tfrac1{\sqrt{2}}\,\left(f^2(0)+f^2(1)\right)^{1/2}.  
$$  
We define $F=G=F_1^{\otimes d}$ as the $d$ folded tensor product of the   
space $F_1$. 
The space $F$ consists of piecewise constant functions  
over $2^d$ subintervals of volume $2^{-d}$ which   
are a partition of the cube $[0,1]^d$. The space $F_d$ is also equipped  
with the $L_2$ norm. Clearly, $\mbox{dim}(F)=2^d$.  
  
Let $S:F\to G$ be the identity operator. Then $\lambda_j=1$ for  
all $j=1,2,\dots,2^d$ and any nonzero function from $F$ is   
an eigenfunction of $W=S^*S=I$. Clearly, $\lambda_{2^d+1}=0$ and therefore  
$$  
e_n(S,\lall)=1 \ \ \ \mbox{for all}\ \ n<2^d\ \ \mbox{and}\ \   
e_{2^d}(S,\lall)=0.  
$$  
Obviously,   
it also proves that $e_n(S,\lstd)=1$ for $n<2^d$   
since  $1=e_n(S,\lall)\le e_n(S,\lstd)\le e_0(S)=1$.  
  
For any $g$ of norm $1$ we have $e_0(I_g)=e_0(S)=1$. We now show that   
$e_n(I_g,\lstd)$ very much depends on the choice of $g$.    
  
Suppose we take $g=2^{d/2}$ over $[0,\tfrac12]^d$  
and $g=0$ otherwise. Then  
$$  
I_g(f)=2^{d/2}\,\int_{[0,\tfrac12]^d}f(x)\,{\rm d}x=2^{-d/2}\,f(0)  
$$   
is a trivial linear functional which can be solved exactly by using   
one function value at~$0$. Hence,  
$$  
e_n(I_g,\lstd)=0\ \ \ \ \mbox{for all}\ \ n\ge1.  
$$  
In this case, the bound $0=e_n(I_g,\lstd)\le e_n(S,\lstd)=1$ is useless.    
  
Take now $g=1$ over the cube $[0,1]^d$. Then  
$$  
I_g(f)=\int_{[0,1]^d}f(x)\,{\rm d}x=\frac1{2^d}\,\sum_{j=[j_1,j_2,\dots,j_d]\in  
\{0,1\}^d}f(t_{j_1},t_{j_2},\dots,t_{j_d}),  
$$  
where $t_0=0$ and $t_1=1$. We prove that the $n$th minimal error  
for $n\le 2^d$ is   
$$  
e_n(I_d,\lstd)=\left(1-n\,2^{-d}\right)^{1/2}.  
$$  
Indeed, for $n=2^d$ we can sample $f$ at all points $f(t_1,t_2,\dots,t_d)$   
with $t_j\in\{0,1\}$ and recover~$I_g$ exactly. Therefore $e_{2^d}(I_d,\lstd)=0$.  
  
Assume now that $n<2^d$. Suppose we sample $f$ at some $x_1,x_2,\dots,x_n$  
from the unit cube $[0,1]^d$.    
Then it is enough to take $f$ which is zero at $n$ sub-cubes that  
contain samples  
$x_1,x_2,\dots,x_n$, and which takes a constant value $c$   
at $2^d-n$ sub-cubes.   
Taking $c$ for which the norm is $1$ we obtain   
the equation $1=c^2(2^d-n)/2^d$ which yields $c=\pm\,(2^d/(2^d-n))^{1/2}$. Then     
$$  
I_g(f)=c\,\frac{2^d-n}{2^d}=\pm\,\left(1-n\,2^{-d}\right)^{1/2}.  
$$  
All linear algorithms must approximate $I_g(f)$ by zero and therefore   
their worst case error is at least $\left(1-n\,2^{-d}\right)^{1/2}$.  
The last bound is sharp if we take sample points $x_1,x_2,\dots,x_n$ at disjoint  
sub-cubes, as claimed.    
  
Hence, in this case we have  
$$  
\left(1-n\,2^{-d}\right)^{1/2}=e_n(I_g,\lstd)\le e_n(S,\lstd)=1.  
$$  
The bound is quite sharp as long as $n$ is much smaller than $2^d$.   
   
\qed  
\end{example}   
 
For general spaces, we will use   
Theorem~\ref{thm1} for $I_g$ with $g=\lambda_1^{-1/2}S\eta_1$.  
For the standard class of information~$\lstd$,  
using lower bounds results for $I_g$   
from \cite{NW10},  
we obtain lower bounds results for $S$.   
In this way we show, in particular, that we have sometimes   
the curse of dimensionality for~$\lstd$ which is not present   
for the class $\lall$.    
  
\section{Tractability Notions}  
 
We need to recall the definition of the information complexity for  
the so-called normalized error criterion.  It is defined as  
the minimal number of linear functionals from the class  
$\Lambda$  which are needed to reduce  
the initial error by  a factor $\e\in(0,1)$, where   
$\Lambda\in\{\lstd,\lall\}$. That is,    
$$  
n(\eps,S,\Lambda)=\min\{\,n\,:\ e_n(S,\Lambda)\le \eps\,e_0(S)\}.  
$$  
For the class $\lall$, we obviously have  
$$  
n(\e,S,\lall)=\min\{\,n\,:\ \lambda_{n+1}\le \e^2\,\lambda_1\}.  
$$  
Unfortunately, there is no such or similar formula for the class $\lstd$.   
   
Assume now that we have a sequence   
$$  
\calS=\{S_d\}_{d=1}^\infty  
$$   
of continuous linear non-zero operators   
$S_d:F_d\to G_d$, where $F_d$ is a reproducing kernel Hilbert space   
of real function defined over $D_d\subset \reals^d$ and $G_d$   
is a Hilbert space.   
In this case, we want to verify  
how the information complexity $n(\e,S_d,\Lambda)$ depends on $\e^{-1}$  
and $d$.    
  
In this paper we will use only a few tractability notions which are defined  
as follows. We say that   
\begin{itemize}  
\item $\calS$ suffers from the curse of dimensionality   
for the class $\Lambda\in\{\lstd,\lall\}$   
iff there are positive numbers $c$ and $C$ as well as   
$\e_0\in(0,1)$ such that  
$$  
n(\e,S_d,\Lambda)\ge C\,(1+c)^d  
$$  
for all $\e\in(0,\e_0]$ and infinitely many $d$.  
\item   
$\calS$ is quasi-polynomially tractable (QPT) for the class $\Lambda$   
iff there are non-negative numbers $C$ and $t$ such that  
$$  
n(\e,S_d,\Lambda)\le C\,\exp\left(\,t\,  
(1+\ln\,\e^{-1})(1+\ln\,d)\right)\ \ \ \   
\mbox{for all}\ \ \e\in(0,1), \, d=1,2,\dots.  
$$  
The infimum of numbers $t$ satisfying the bound above is called   
the exponent of QPT and denoted by $t^*$.  
\item   
$\calS$ is polynomially tractable (PT) for the class $\Lambda$  
iff there are nonnegative numbers $C,p,q$ such that  
$$  
n(\e,S_d,\Lambda)\le C\,\e^{-p}\,d^{\,q}\ \ \ \   
\mbox{for all}\ \ \e\in(0,1),\, d=1,2,\dots.    
$$  
\end{itemize}  
Clearly, PT implies QPT.  
More about these and other tractability concepts   
can be found in \cite{NW08,NW10,NW12}.  
   
For the class $\lall$, tractability notions depend on the decay   
of the eigenvalues $\lambda_{d,j}$ of the operator $W_d=S^*_dS_d:F_d\to F_d$.   
Necessary and sufficient conditions can be found in the works cited above.  
Again for the class $\lstd$, no such conditions are known  
and they cannot depend only on the eigenvalues $\lambda_{d,j}$.  
  
\section{Linear Tensor Product Problems}  
 
{}From now on we study a sequence  
$$  
\calS=\{S_d\}_{d=1}^\infty  
$$   
of tensor product problems.  
Hence  the spaces $F=F_d = F_1^{\otimes d}$ and $G=G_d = G_1^{\otimes d}$  
as well as $S=S_d = S_1^{\otimes d}$ are given by tensor products of 
$d$ copies of $F_1$  
and $G_1$ as well as a continuous linear operator $S_1:F_1\to G_1$,  
respectively,   
where $F_1$ is a reproducing kernel Hilbert space of real univariate   
functions defined over $D_1\subset \reals$ and $G_1$ is a Hilbert space.   
Then $F_d$ is a space of $d$-variate real functions defined   
on $D_d=D_1\times D_1\times\cdots\times D_1$ ($d$ times). 

An important example 
is given by multivariate approximation.  
That is, we now take $G_d=L_2([0,1]^d)$ and   
$S_1=\APP_1:F_1\to G_1$ with the embedding operator   
$\APP_1f=f$. Then $S_d=\APP_d:F_d\to G_d$ is also the embedding  
operator $\APP_df=f$ for all $f\in F_d$. In this case, we denote   
$$  
\calS=\calA, \ \ \ \mbox{where}\ \ \ \calA=\{\APP_d\}_{d=1}^\infty.  
$$  

If $K_1$ is   
   the  reproducing kernel of $F_1$ then $F_d$ is a reproducing kernel Hilbert space   
whose kernel is  
$$  
K_d(x,y)=\prod_{j=1}^dK_1(x_j,y_j)\ \ \ \   
\mbox{for all}\ \ x=[x_1,x_2,\dots,x_d],\,y=[y_1,y_2,\dots,y_d]\in D_d.  
$$  
For the class $\lall$, it is well  
known that the eigenpairs $(\lambda_{d,j},\eta_{d,j})$ of $W_d=S_d^*S_d:F_d\to F_d$   
are given in terms of the eigenpairs $(\lambda_j,\eta_j)$ of the univariate   
operator $W_1=S^*_1S_1:F_1\to F_1$. As before we assume   
that $\lambda_1\ge\lambda_2\ge\cdots$,  $\il \eta_j,\eta_k\ir_{F_1}=  
\delta_{j,k}$ and that $S_1$ is non-zero. This means that $\lambda_1>0$.   
For the operator $W_d$ we have   
$$  
\{\lambda_{d,j}\}_{j=1}^\infty=\{\lambda_{j_1}\lambda_{j_2}\cdots\lambda_{j_d}  
\}_{j_1,j_2,\dots,j_d=1}^\infty.  
$$  
Similarly, the eigenfunctions of $W_d$ are of product form  
$$  
\{\eta_{d,j}\}_{j=1}^\infty=\{  
\eta_{j_1}\otimes  
\eta_{j_2}\otimes \cdots\otimes \eta_{j_d}  
\}_{j_1,j_2,\dots,j_d=1}^\infty,     
$$  
where  
$$  
[\eta_{j_1}\otimes  
\eta_{j_2}\otimes \cdots\otimes \eta_{j_d}](x)=  
\prod_{k=1}^d\eta_{j_k}(x_k)\ \ \ \   
\mbox{for all}\ \ x=[x_1,\dots,x_d]\in D_d.  
$$  
Then $\|S_d\|_{F_d\to G_d}=\|W_d\|_{F_d\to F_d}^{1/2}=\lambda_1^{d/2}$.  
Hence, the initial error is  
$$  
e_0(S_d)=\lambda_1^{d/2}.  
$$  
If $\lambda_2=\lambda_1$ then we have at least $2^d$ eigenvalues of $W_d$   
equal to $\lambda_1^d$, and therefore   
$$  
n(\e,S_d)\ge2^d\ \ \ \ \mbox{for all}\  \ \e\in(0,1)\ \ \mbox{and}\ \   
d=1,2,\dots\,.  
$$  
In this case $\calS=\{S_d\}_{d=1}^\infty$   
suffers from the curse of dimensionality.  
On the other hand, it is proved in  
\cite{GW11}, see also \cite{NW12} p. 112,  
that $\calS$ is QPT for the class $\lall$ iff  
$\lambda_2<\lambda_1$ and   
\begin{equation}\label{decay}  
{\rm decay}_\lambda:=\sup\{\,r>0\, :\ \lim_{n\to\infty}n^r\lambda_n=0\,\}>0.  
\end{equation}  
If the last conditions hold then the exponent of QPT is  
$$  
t^*=\max\left(\frac2{{\rm decay}_\lambda},\frac2{\ln\,  
\frac{\lambda_1}{\lambda_2}}\right).  
$$  
Note that for $\lambda_2=0$ we have $t^*=0$. In this case, $S_d$ is a   
continuous linear functional and $n(\e,S_d,\lall)=1$  
for all $\e\in[0,1)$ and all $d=1,2,\dots\,$. If $\lambda_2>0$ then  
the exponent of QPT is positive and in this case it is also known that  
the problem $\calS$ is \emph{not} PT for the class $\lall$.  
  
We now turn to the class $\lstd$. Without loss of generality we assume  
that $\lambda_2<\lambda_1$ since otherwise $\calS$   
suffers from the curse of dimensionality also for the class $\lstd$.   
Then the choice of the element $g$   
for which $e_0(I_g)=e_0(S)$ in Theorem \ref{thm1}   
is essentially unique  
and we take $g=g_d$ with $g_d=\lambda_{d,1}^{-1/2}S_d\eta_{d,1}$.  
We have     
$S^*_dg_d=\lambda_{d,1}^{1/2}\,\eta_{d,1}$ which is a tensor product since   
$$  
S_d^*g_d=(\lambda_1^{1/2} \eta_1)\otimes\cdots \otimes (\lambda_1^{1/2}\eta_1)  
\ \ (\mbox{$d$ times}).  
$$  
Let   
$$  
I_1f=\il f,\lambda_1^{1/2}\eta_1\ir_{F_1}\ \ \ \ \mbox{for all}  
\ \ f\in F_1.  
$$  
Then   
$$  
I_d=I_1\otimes\cdots\otimes I_1\ \ (\mbox{$d$ times})  
$$  
is a linear tensor product functional. We have  $I_d=I_{g_d}$  
and $e_0(I_d)=e_0(S_d)$.   
Let  
$$  
\calI=\{I_d\}_{d=1}^\infty.  
$$  
Theorem~\ref{thm1} yields that  
$$  
n(\e,I_d,\lstd)\le n(\e,S_d,\lstd)\ \ \ \   
\mbox{for all}\ \ \e\in(0,1),\ d=1,2,\dots\,.  
$$  
This implies the following corollary  
\begin{cor}\label{cor111}  
\quad  
  
If one of the tractability notions does not hold for $\calI$  
then it also does not hold for $\calS$.   
\qed  
\end{cor}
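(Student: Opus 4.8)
The plan is to obtain the corollary as an immediate consequence of the inequality
$$
n(\e,I_d,\lstd)\le n(\e,S_d,\lstd)\qquad\mbox{for all }\ \e\in(0,1),\ d=1,2,\dots,
$$
which was derived just above from Theorem~\ref{thm1}: applying that theorem to $g=g_d=\lambda_{d,1}^{-1/2}S_d\eta_{d,1}$ gives $e_n(I_d,\lstd)\le e_n(S_d,\lstd)$ for every $n$, and since $e_0(I_d)=e_0(S_d)$ the normalized information complexities are compared against the same threshold, so the displayed bound holds uniformly in $\e$ and $d$. The whole point is then that each tractability notion recalled above is defined purely by the existence of a majorant of a prescribed form for the map $(\e,d)\mapsto n(\e,S_d,\lstd)$, and any such majorant is automatically a majorant for the pointwise smaller quantity $n(\e,I_d,\lstd)$.

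Concretely, I would argue by contraposition. Assume $\calS$ satisfies one of the notions. If, say, $\calS$ is polynomially tractable, so $n(\e,S_d,\lstd)\le C\,\e^{-p}d^{\,q}$ for all $\e\in(0,1)$ and all $d$, then the displayed inequality forces $n(\e,I_d,\lstd)\le C\,\e^{-p}d^{\,q}$ for all $\e\in(0,1)$ and all $d$, i.e.\ $\calI$ is polynomially tractable as well. The identical one-line substitution handles quasi-polynomial tractability (replace $C\,\e^{-p}d^{\,q}$ by $C\exp(t(1+\ln\e^{-1})(1+\ln d))$), and, more generally, any tractability notion of majorant type. Negating each implication gives exactly the statement of the corollary.

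Finally, I would record the companion fact about the curse of dimensionality, which travels the other way along the same inequality and is the form actually used in the applications: if $\calI$ suffers from the curse for $\lstd$, with constants $c,C>0$ and $\e_0\in(0,1)$ such that $n(\e,I_d,\lstd)\ge C(1+c)^d$ for all $\e\le\e_0$ and infinitely many $d$, then $n(\e,S_d,\lstd)\ge n(\e,I_d,\lstd)\ge C(1+c)^d$ for those $\e$ and $d$, so $\calS$ suffers from the curse too. There is essentially no obstacle in the argument; the only point requiring care is that the inequality $n(\e,I_d,\lstd)\le n(\e,S_d,\lstd)$ holds simultaneously for every $\e\in(0,1)$ and every $d$, which it does because Theorem~\ref{thm1} is valid for each fixed $d$ and the normalizing initial errors agree on both sides.
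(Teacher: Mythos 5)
Your proposal is correct and is exactly the paper's argument: the corollary is stated as an immediate consequence of the inequality $n(\e,I_d,\lstd)\le n(\e,S_d,\lstd)$, which holds because Theorem~\ref{thm1} gives $e_n(I_d,\lstd)\le e_n(S_d,\lstd)$ while the matching initial errors $e_0(I_d)=e_0(S_d)$ ensure both complexities are measured against the same threshold $\e\,e_0(S_d)$. Your explicit contrapositive check for each majorant-type notion and the direct lower-bound transfer for the curse simply spell out what the paper leaves implicit.
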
  
  
We now illustrate Corollary~\ref{cor111} for two examples for which $\calI$  
is multivariate integration  and for which  
it is known that multivariate integration   
suffers from the curse of dimensionality.  
  
\begin{example}\label{kor} {\bf Korobov Space}  
\quad  
  
As in \cite{GW11,NW12}, let $F_1$   
be a Korobov space whose reproducing kernel is  
$$  
K_1(x,y)=1+2\beta\,\sum_{k=1}^\infty\frac{\cos(2\pi(x-y))}{k^{2\alpha}}  
\ \ \ \ \mbox{for all}\ \ x,y\in[0,1]   
$$  
for some $\beta\in(0,1]$ and $\alpha>\tfrac12$. This corresponds to the norm  
$$  
\|f\|_{F_1}^2=|\widehat{f}(0)|^2+\beta^{-1}\,\sum_{h\in\Z\setminus\{0\}}  
|h|^{2\alpha}\,|\widehat{f}(h)|^2,  
$$  
for Fourier coefficients $\widehat{f}(h)$ of $f$. 
We take $G_1=L_2([0,1])$   
and $S_1f=f$, hence we consider the approximation problem $\calA$. 

In this case we know that   
$$  
\lambda_1=1, \ \ \lambda_2=\beta\ \ \ \mbox{and}\ \ \   
\eta_1\equiv 1.  
$$  
Hence, for $\beta=1$ multivariate approximation suffers   
from the curse of dimensionality for $\lall$ (and of course for $\lstd$),  
and for $\beta<1$, multivariate approximation is QPT with the exponent  
$$  
t^*=\max\left(\frac1{\alpha},\frac2{\ln\,\beta^{-1}}\right).  
$$  
  
Note that $g_1=\lambda_1^{-1/2}S_1\eta_1\equiv 1$. Therefore  
$g_d\equiv 1$ and   
$$  
I_d(f)=\int_{[0,1]^d}f(x)\,{\rm d}x\ \ \ \ \mbox{for all}  
\ \ f\in F_d  
$$  
is multivariate integration.   
{}From Theorem 16.16 on p. 457 in \cite{NW10}  
 which is based on \cite{HW01,NW01},  
we know that multivariate integration suffers from the curse of dimensionality.  
So does multivariate approximation   
due to Corollary~\ref{cor111}.   
\qed  
\end{example}  
  
\begin{example} {\bf Sobolev Space}   
\quad  
 
We now take the Sobolev space $F_1$ of absolutely   
continuous functions on $[0,1]$  whose first derivatives  
are square integrable with the inner product  
$$  
\left<f,u\right>_{F_1}=\int_0^1f(x)\,u(x)\,{\rm d}x\ +\ \int_0^1  
f^{\prime}(x)\,u^{\prime}(x)\,{\rm d}x.  
$$  
This space has the intriguing reproducing kernel  
$$  
K_1(x,t)=\frac1{\sinh(1)}\,\cosh(1-\max(x,t))\,\cosh(\min(x,t))  
\ \ \ \ \mbox{for all}\ \ x,t\in[0,1],  
$$  
see \cite{BT04}.  
We consider the $L_2$  approximation problem, as in Example 4.  
Hence we have $S_1 f =f$ and $G_1 = L_2([0,1])$.  
  
In this case we have for $d=1$ that  
$\lambda_1=1$ is of multiplicity $1$, and $\eta_1\equiv 1$.   
The second largest eigenvalue satisfies the condition  
$$  
\lambda_2=\max_{f\in F_1,\, \int_0^1f(x)\,{\rm dx}=0}  
\frac{\int_0^1f^2(x)\,{\rm d}x}{\int_0^1f^2(x)\,{\rm d}x+ \int_0^1  
[f^{\prime}(x)]^2\,{\rm d}x}=\frac1{1+\mu_2},  
$$  
where  
$$  
\mu_2=\min_{f\in F_1,\,\int_0^1f(x)\,{\rm  
    d}x=0}\frac{\int_0^1[f^\prime(x)]^2\,{\rm d}x}{\int_0^1f^2(x)\,{\rm  
    d}x}.  
$$  
It is known, see e.g. \cite{PW60}, that $\mu_2=\pi^2$. Hence,  
have   
$$  
\lambda_2=\frac1{1+\pi^2}=0.091999668\dots.  
$$  
It is well known that $\lambda_j=\Theta(j^{-2})$ so that ${\rm  
decay}_\lambda=2$.   
This implies that multivariate approximation for tensor products $F_d=F_1^{\otimes\,d}$  
and $G_d=L_2([0,1]^d)$ is QPT for $\lall$ with the exponent  
$$  
t^*=1.   
$$  
Due to the form of $(\lambda_1,\eta_1)$, the linear functional  
$I_d$ corresponds to multivariate integration. It is known that  
multivariate integration suffers from the curse, see \cite{SW02}  
which is also reported in \cite{NW10} pp. 605-606.  
Hence, multivariate approximation also suffers the curse of dimensionality   
for the class $\lstd$ due to Corollary~\ref{cor111}.  
\qed  
\end{example}  
  
Tractability of tensor product functionals $\calI$    
was thoroughly studied   
in \cite{NW01}, see also Chapters 11 and 12 of \cite{NW10}.  
In particular, for many spaces $F_1$ the problem $\calI$  
suffers from the curse of dimensionality for the class $\lstd$.   
This holds if the reproducing kernel $K_1$ of $F_1$ has a decomposable part  
and the univariate function $\eta_1$ has non-zero components   
with respect to the decomposable part.  
If this is the case then $\calS$ also suffers from the curse of dimensionality   
for the class $\lstd$ although we may have QPT for the class $\lall$.   
We will mention more specific results in the next section.   
  
\section{Sobolev Space}\label{unweighted}  
  
We now consider tensor product problems $\calS$ defined as in the previous section    
for the space $F_1$ taken as a Sobolev space of univariate   
real functions defined over $[0,1]$.   
More precisely, let $F_1$ be the space   
of absolutely continuous functions defined over $[0,1]$ and whose  
first derivatives belong to $L_2([0,1])$. The space $F_1$ has the   
reproducing kernel  
\begin{equation}\label{ker1}  
K_1(x,y)=1+\min(x,y)\ \ \ \ \mbox{for all}\ \ x,y\in [0,1],  
\end{equation}  
and the inner product for $f,h\in F_1$ is   
$$  
\il f,h\ir_{F_1}=f(0)\,h(0)+\int_0^1f^{\prime}(x)\,h^{\prime}(x)\,{\rm d}x.  
$$  
For the tensor product space $F_d = F_1^{\otimes d}$ of $d$ copies of $F_1$,   
the inner product for $f,h\in F_d$ is now of the form  
$$  
\il f,h\ir_{F_d}=f(0)\,h(0)+\sum_{\emptyset\not=\uset\subseteq\{1,2,\dots,d\}}  
\int_{[0,1]^{|\uset|}}\frac{\partial^{|\uset|}f}{\partial x_{\uset}}(x_\uset,0)  
\frac{\partial^{|\uset|}h}{\partial x_{\uset}}(x_\uset,0)\,{\rm d}x_{\uset},  
$$  
where $\partial x_{\uset}=\prod_{j\in\uset}\partial x_j$, ${\rm d}x_{\uset}=  
\prod_{j\in \uset}{\rm d}x_j$ and   
$(x_{\uset},0)$ is a $d$ dimensional vector with   
components~$x_j$ for $j\in \uset$ and $0$ otherwise.   
  
It was proved in \cite{NW10} pp.195-200, see also \cite{NW10a},   
that for any linear   
non-zero tensor product functional   
its information complexity (for $\Lambda^{\rm std}$)  is $1$ or it is  
exponentially large in $d$. Furthermore, the information complexity  
is $1$ only for trivial cases when the linear tensor   
product functional is of the form   
$$  
a^df(t,t,\dots,t)=\il f, h_d\ir_{F_d}  
\ \ \ \mbox{with}\ \ \ h_d(x)=\prod_{j=1}^da\,(1+\min(x_j,t))  
$$   
for some non-zero real $a$ and for some $t\in [0,1]$.   
Applying this results for $\calI$ we see that as long as   
\begin{equation}\label{goodcase}  
\eta_1\not=a(1+\min(\cdot,t))\ \ \ \ \mbox{for all\, $a\in\reals\   
\mbox{and}\ t\in[0,1]$}  
\end{equation}  
then $\calI$ as well as $\calS$ suffer from the curse of dimensionality   
for the class $\lstd$. We summarize the results from the last two sections  
in the following theorem.  
\begin{thm}\label{thm2}  
\quad  
  
Consider a linear non-zero tensor product problem $\calS$,  
as  defined in this section.   
\begin{itemize}  
\item Let $\lambda_2=\lambda_1$.   
Then $\calS$ suffers   
from the curse of dimensionality for $\lall$ (and $\lstd$).  
\item Let $\lambda_2<\lambda_1$. Then    
$\calS$ is QPT for $\lall$ iff \eqref{decay} holds.  
\item Let $0<\lambda_2<\lambda_1$. Then      
$\calS$ is not PT for the class $\lall$.  
\item Let $\lambda_2<\lambda_1$. If \eqref{goodcase}   
holds then $\calS$ suffers from the curse of dimensionality for $\lstd$.  
\end{itemize}   
\qed  
\end{thm}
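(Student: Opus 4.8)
\textbf{Proof proposal for Theorem \ref{thm2}.}

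The plan is to assemble the four bullet points from results already established earlier in the excerpt, so the ``proof'' is essentially a bookkeeping argument that cites the right pieces. For the first bullet, when $\lambda_2=\lambda_1$ the tensor product structure gives at least $2^d$ eigenvalues of $W_d$ equal to $\lambda_1^d$ (choosing each index $j_k\in\{1,2\}$), hence $n(\e,S_d,\lall)\ge 2^d$ for every $\e\in(0,1)$; since $\lstd$ is no easier than $\lall$, the curse holds for both classes. This is exactly the computation already displayed in Section~4.

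For the second and third bullets, I would simply invoke the Gnewuch--Wasilkowski characterization quoted in Section~4 (following \cite{GW11}, see also \cite{NW12} p.~112): when $\lambda_2<\lambda_1$, $\calS$ is QPT for $\lall$ if and only if \eqref{decay} holds, with exponent $t^*=\max(2/{\rm decay}_\lambda,\,2/\ln(\lambda_1/\lambda_2))$; and when additionally $\lambda_2>0$, that same reference states $\calS$ is not PT for $\lall$. Nothing new is needed here beyond citing the stated facts.

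The fourth bullet is the genuinely new content and the one requiring care. Assume $\lambda_2<\lambda_1$, so by Theorem~\ref{thm1} the element $g_d=\lambda_{d,1}^{-1/2}S_d\eta_{d,1}$ is essentially unique and $e_0(I_{g_d})=e_0(S_d)$; moreover $S_d^*g_d=\lambda_1^{1/2}\eta_1\otimes\cdots\otimes\lambda_1^{1/2}\eta_1$, so the associated functional is the tensor power $I_d=I_1^{\otimes d}$ with $I_1f=\langle f,\lambda_1^{1/2}\eta_1\rangle_{F_1}$. By Theorem~\ref{thm1} (in the tensor form recorded as Corollary~\ref{cor111}), $n(\e,I_d,\lstd)\le n(\e,S_d,\lstd)$, so it suffices to show $\calI=\{I_d\}$ suffers the curse for $\lstd$. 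For this I invoke the dichotomy from \cite{NW10} pp.~195--200 (see also \cite{NW10a}) specific to the kernel $K_1(x,y)=1+\min(x,y)$: a non-zero linear tensor product functional on $F_d$ has information complexity either equal to $1$ (and this happens precisely when it is of the trivial point-evaluation form $a^d f(t,\dots,t)=\langle f,h_d\rangle_{F_d}$ with $h_d(x)=\prod_j a(1+\min(x_j,t))$) or exponentially large in $d$. The representer of $I_1$ is $\lambda_1^{1/2}\eta_1$, and the functional $I_d$ is of the trivial form exactly when $\lambda_1^{1/2}\eta_1=a(1+\min(\cdot,t))$ for some real $a$ and $t\in[0,1]$, i.e. exactly when \eqref{goodcase} fails. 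Hence under \eqref{goodcase} the complexity of $I_d$ is exponential in $d$, so $\calI$ suffers the curse, and therefore so does $\calS$.

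The main obstacle, such as it is, is purely expository: one must state precisely the hypotheses under which the cited dichotomy of \cite{NW10} applies (that it is stated for this particular reproducing kernel and for the class $\lstd$ with the normalized error criterion), and then match the representer $\lambda_1^{1/2}\eta_1$ of $I_1$ against the trivial family $a(1+\min(\cdot,t))$ so that the non-triviality condition \eqref{goodcase} is seen to be exactly the negation of the ``complexity $=1$'' case. Once that identification is made, combining it with Corollary~\ref{cor111} is immediate and the theorem follows. \qed
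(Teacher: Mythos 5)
Your proposal is correct and follows the paper's own route essentially verbatim: the first three bullets are the $2^d$-eigenvalue computation and the quoted characterizations from \cite{GW11}, and the fourth bullet combines Corollary~\ref{cor111} (transfer of the curse from $\calI$ to $\calS$ via Theorem~\ref{thm1}, using that $e_0(I_{g_d})=e_0(S_d)$) with the dichotomy of \cite{NW10,NW10a} for tensor product functionals over this Sobolev space, noting that \eqref{goodcase} is exactly the negation of the trivial point-evaluation case (the factor $\lambda_1^{1/2}$ being absorbed into $a$). Nothing is missing.
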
  
  
In general, the assumption~\eqref{goodcase}   
used in the last part of    
Theorem \ref{thm2} is   
needed.   
Indeed, if \eqref{goodcase} does not hold then we may have  
$S_d$ as a linear tensor product functional of the form   
$S_df=a^df(t,t,\dots,t)$ with a nonzero real $a$ and $t\in[0,1]$.  
Then $\calS$ is trivial since $n(\e,S_d,\lstd)=1$ for all $\e\in[0,1)$  
and all $d$.  
  
We now verify the assumptions \eqref{decay} and \eqref{goodcase}  
for multivariate approximation $\calA$. 
    
The eigenpairs $(\lambda_j,\eta_j)$ were found in \cite{WW95}, see also  
\cite{NW12} pp. 409-411. We have  
$\lambda_j=\alpha_j^{-2}$, where  
$\alpha_j\in((j-1)\pi,j\pi)$ is the unique solution of the nonlinear equation  
$$  
\cot\,x=x\ \ \ \ \ \mbox{for}\ \ x\in ((j-1)\pi,j\pi),  
$$  
and   
$$  
\eta_j(x)=\beta_j\,\cos (\alpha_jx-\alpha_j)\ \ \ \ \mbox{for all}\ \ x\in[0,1],  
$$     
where  
$$  
\beta_j=\left(\cos^2(\alpha_j)+\tfrac{\alpha}2\,   \left(\alpha_j  
           -\tfrac12\sin(2\alpha_j)\right)\right)^{-1/2} .  
$$  
For $j=1$ and $j=2$ the numerical computation yields  
\begin{eqnarray*}  
\lambda_1&=&1.35103388\dots,\\  
\lambda_2&=&0.08521617\dots \,.  
\end{eqnarray*}  
Clearly, $\alpha_j=\pi\,j\,(1+o(1))$ as $j$ tends to infinity. This shows that  
$$  
\lambda_j=\frac{1+o(1)}{\pi^2\,j^2}\ \ \ \ \mbox{as}\ \ j\to\infty.  
$$  
Therefore ${\rm decay}_{\lambda}=2$. Hence \eqref{decay} holds  
and $\calA$ is QPT for the class $\lall$.   
  
The assumption \eqref{goodcase} holds if for all $a\in\reals$ and $t\in[0,1]$  
we can find $x\in[0,1]$ such that   
$$  
\cos(\alpha_1x-\alpha_1)\not= a(1+\min(x,t)).  
$$  
For $t=0$, the right hand side is constant, whereas the left hand side varies.  
For $t>0$, the right hand side is constant over $[0,t]$, whereas  
the left hand side varies. Therefore \eqref{goodcase} also holds.   
We summarize this in the following corollary.  
\begin{cor}\label{cor1}  
\quad  
  
Consider the multivariate approximation problem $\calA$ for the Sobolev spaces studied 
   in this section. 
Then  
\begin{itemize}  
\item $\calA$ is QPT but not PT for the class $\lall$ with the exponent of QPT  
$$  
t^*= 1.   
$$  
\item $\calA$ suffers from the curse of dimensionality for the class $\lstd$.  
\end{itemize}  
\qed  
\end{cor}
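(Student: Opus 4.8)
The plan is to obtain both assertions of Corollary~\ref{cor1} by specializing Theorem~\ref{thm2} to $\calS=\calA$, i.e.\ to the embedding $\APP_1:F_1\to L_2([0,1])$, once its hypotheses have been checked by means of the explicit eigenpairs $(\lambda_j,\eta_j)$ recalled just above. Almost all of the needed verification has in fact already been assembled in the paragraphs preceding the statement, so the argument is mostly bookkeeping; the one genuinely new point will be the evaluation of the QPT exponent.

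For the class $\lall$ I would proceed as follows. From the numerical values $\lambda_1=1.35103388\dots$ and $\lambda_2=0.08521617\dots$ one reads off $0<\lambda_2<\lambda_1$, so the third bullet of Theorem~\ref{thm2} gives at once that $\calA$ is not PT for $\lall$, and the second bullet reduces QPT to condition~\eqref{decay}. Since $\alpha_j\in((j-1)\pi,j\pi)$ solves $\cot x=x$ we have $\alpha_j=\pi j\,(1+o(1))$, hence $\lambda_j=\alpha_j^{-2}=(1+o(1))/(\pi^2 j^2)$, so $n^r\lambda_n\to0$ exactly for $r<2$ and ${\rm decay}_\lambda=2>0$; thus $\calA$ is QPT for $\lall$. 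It then remains to substitute into the exponent formula $t^*=\max\left(\tfrac{2}{{\rm decay}_\lambda},\tfrac{2}{\ln(\lambda_1/\lambda_2)}\right)$ recalled in Section~4: the first term equals $1$, while $\lambda_1/\lambda_2>e^2$ (numerically $\ln(\lambda_1/\lambda_2)\approx2.76$) forces the second term to be $<1$, so the maximum is attained by the first and $t^*=1$.

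For the class $\lstd$, by the last bullet of Theorem~\ref{thm2} (applicable since $\lambda_2<\lambda_1$) it suffices to verify~\eqref{goodcase}, i.e.\ that $\eta_1$ is not proportional to $1+\min(\cdot,t)$ for any $t\in[0,1]$. Writing $\eta_1(x)=\beta_1\cos(\alpha_1 x-\alpha_1)=\beta_1\cos(\alpha_1(x-1))$ with $\alpha_1\in(0,\pi)$ the unique (hence nonzero) root of $\cot x=x$ on $(0,\pi)$, the function $\eta_1$ is strictly monotone on $[0,1]$, so it is not constant on any subinterval of positive length; whereas $a\,(1+\min(x,t))$ is constant on a nondegenerate subinterval of $[0,1]$ containing $0$ — all of $[0,1]$ when $t=0$, and $[0,t]$ when $t>0$ — and therefore cannot agree with $\eta_1$ there. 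Hence~\eqref{goodcase} holds and Theorem~\ref{thm2} yields the curse of dimensionality for $\lstd$. The only mild subtlety in the whole proof is confirming that the eigenvalue-decay term dominates the spectral-gap term in the maximum defining $t^*$, which comes down to the numerical inequality $\lambda_1/\lambda_2>e^2$; everything else is direct substitution into Theorem~\ref{thm2} together with the elementary observation that a nonconstant cosine cannot be locally constant near the origin.
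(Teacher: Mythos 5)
Your proposal follows the paper's own proof essentially step for step: read $0<\lambda_2<\lambda_1$ and ${\rm decay}_\lambda=2$ off the Wasilkowski--Wo\'zniakowski eigenpairs, apply the second and third bullets of Theorem~\ref{thm2} for the $\lall$ statements, evaluate $t^*=\max\bigl(1,\,2/\ln(\lambda_1/\lambda_2)\bigr)=1$, and verify \eqref{goodcase} so that the last bullet of Theorem~\ref{thm2} yields the curse for $\lstd$. Your explicit check that $2/\ln(\lambda_1/\lambda_2)\approx 0.72<1$ is in fact slightly more careful than the paper, which states $t^*=1$ without displaying this comparison.

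One point in your verification of \eqref{goodcase} needs repair (and the paper's own wording contains the same slip): for $t>0$ the function $x\mapsto a\,(1+\min(x,t))$ is \emph{not} constant on $[0,t]$; there $\min(x,t)=x$, so it equals $a(1+x)$. The flat piece is $[t,1]$. With that correction your argument (a strictly monotone $\eta_1$ cannot agree with a function that is constant on a nondegenerate interval) works for every $t\in[0,1)$, but it says nothing for $t=1$, where $a(1+\min(x,1))=a(1+x)$ has no flat piece at all. That remaining case is easily dispatched: $\eta_1''=-\alpha_1^2\eta_1$ with $\alpha_1\in(0,\pi)$ and $\eta_1$ nowhere zero on $[0,1]$, so $\eta_1$ is not affine on any nondegenerate subinterval, whereas $a(1+x)$ is affine. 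As written, though, your proof does not cover $t=1$. Everything else is correct and coincides with the paper's route.
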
    
We add in passing that a similar analysis can be done   
if the reproducing kernel~\eqref{ker1} is replaced by  
$$  
K_{1,a}(x,t)=1+\tfrac12\left(|x-a|+|t-a|-|x-t|\right)\ \ \ \   
\mbox{for all}\ \ x,t\in[0,1],  
$$  
for any $a\in[0,1]$ or by   
$$  
K_1(x,t)=\min(x,t)\ \ \ \ \mbox{for all}\ \ x,t\in[0,1].  
$$  
For these variants of the Sobolev spaces,  
Corollary~\ref{cor1} is valid.   
 
\begin{rem}  
We conclude this paper with a comment on the rates of convergence and 
tractability notions for $\Lambda^{\rm all}$ and $\Lambda^{\rm std}$.  
In \cite{HNV},  
the $L_2$ approximation problem was studied. It was shown that   
there are classes $F$ for which the best rate of convergence   
of algorithms using $n$ appropriately chosen linear functionals  
is $n^{-1/2}$ whereas for $n$ function values the best rate can be   
arbitrarily bad. 
If the best rate for $\Lambda^{\rm all}$ is faster than  
$n^{-1/2}$  
than we still do not know whether the rates for $\Lambda^{\rm std}$  
and $\Lambda^{\rm all}$ always coincide.  
 
For the examples in this section the rates for $\Lambda^{\rm std}$ 
and $\Lambda^{\rm all}$ are basically (up to log terms)~$n^{-1}$  
but tractability properties for  
$\Lambda^{\rm all}$ and $\Lambda^{\rm std}$  
are quite different. Hence, even if the rates are the 
same,  tractability properties can be quite different for 
$\Lambda^{\rm all}$ and $\Lambda^{\rm std}$.   
\end{rem}  
 
\noindent  
{\bf Acknowledgement.}  
We thank Mario Ullrich who computed some numbers for us.  
We also thank Greg Wasilkowski,  Markus Weimar and  
two referees  for valuable comments.

\end{document}